\let\today\relax
\def\ps@pprintTitle{%
    \let\@oddhead\@empty
    \let\@evenhead\@empty
    \def\@oddfoot{\footnotesize\itshape
         {Submitted preprint} \hfill\today}%
    \let\@evenfoot\@oddfoot
    }
\newtheorem{theorem}{Theorem}
\newtheorem{proposition}{Proposition}
\newtheorem{definition}{Definition}
\newtheorem{remark}{Remark}[section]
\newcommand{\R}{\mathbb{R}}
\newcommand{\St}{\mathbb{S}}
\def\spark{\textnormal{spark\,}}
\def\rank{\textnormal{rank\,}}
\def\tr{\textnormal{tr\,}}
\DeclareMathOperator*{\argmin}{arg\,min}
\begin{document}

\begin{frontmatter}

\title{Reconstruction of jointly sparse vectors via manifold optimization}

\author[address1]{Armenak Petrosyan}
\ead{petrosyana@ornl.gov}

\author[address1]{Hoang Tran}
\ead{tranha@ornl.gov}

\author[address1,address2]{Clayton Webster}
\ead{webstercg@ornl.gov}

\address[address1]{Department of Computational and Applied Mathematics, Oak Ridge National Laboratory, 1 Bethel Valley Road, P.O. Box 2008, Oak Ridge TN 37831-6164, USA}
\address[address2]{Department of Mathematics, University of Tennessee, 1403 Circle Drive, Knoxville, TN 37916, USA}

\begin{abstract}
In this paper, we consider the challenge of reconstructing jointly sparse vectors from linear measurements. 
Firstly, we show that by utilizing the rank of the output data matrix we can reduce the problem to a full column rank case.  
This result reveals a reduction in the computational complexity of the original problem and enables a simple implementation 
of joint sparse recovery algorithms for full-rank setting.  Secondly, we propose a new method for joint sparse recovery in the form of a non-convex optimization problem on a non-compact Stiefel manifold. 
In our numerical experiments our method outperforms the commonly used $\ell_{2,1}$ minimization in the sense that much fewer measurements are required for accurate sparse reconstructions. We postulate this approach possesses the desirable rank aware property, that is, being able to take advantage of the rank of the unknown matrix to improve the recovery. 
\end{abstract}

\begin{keyword}
joint sparsity\sep  manifold optimization \sep non-compact Stiefel manifold \sep non-convex optimization \sep Huber regularization
\MSC[2010] 65F99\sep  68P30 \sep 94A12
\end{keyword}

\end{frontmatter}

 \section{Introduction}
 \label{sec:intro}
The sparse sampling and recovery problem  has been a subject of intensive research for the last two decades, especially after major advances in the field of compressed sensing, through the seminal works \cite{candes2006robust,Donoho06}.  In the classical problem, the goal is to find the vector 
$x\in \R^N$ from the system
$Ax=y$ where $A\in\R^{M\times N}$, under the assumption that the vector $x$ is sparse, i.e., has only few non-zero entries.
{Here, $N$ represents the dimensionality of the unknown vector $x$ and $M$ is the number of linear measurements performed on $x$. A typical difficulty in solving the problem stems from the fact that one often has only a limited number of measurements, i.e., $M<N$, so the system is underdetermined.}

In the joint sparse recovery problem, { also known as multiple measurements vectors (MMV), simultaneous sparse approximation, or collaborative sparse coding,} we instead solve a matrix equation $AX=Y$ where the unknown matrix $X\in \R^{N\times K}$ has only few non-zero rows. {$X$ can be viewed as a collection of $K$ unknown vectors in $\mathbb{R}^N$ which share the same sparsity support. In this case, it has been demonstrated \cite{Schmidt79,Feng98,mishali2008reduce} that recovery of multiple vectors simultaneously can take advantage of their joint sparsity, which helps reduce the number of measurements compared to individual reconstructing each at a time.} The joint sparse recovery problem has applications in hyper-spectral imaging \cite{ chen2011hyperspectral,fang2014spectral,wang2017joint,ZHU2014101}, source localization \cite{van2011sparse,malioutov2005sparse} and many other fields.
{ 
In these problems, the joint sparse recovery is often conducted under additional positivity constraints, which are out of the scope of this paper. Our proposed method therefore is not applicable directly, but is an important block towards these applications. Another example is the CLSUnSAL algorithm \cite{iordache2014collaborative} for hyperspectral unmixing, which is based on a joint sparse regularization.
}

In this paper, we first utilize the rank of the output data matrix $Y$ to reduce the problem to the case where unknown matrix has full column rank. More specifically, we prove the following theorem (Section \ref{sec:reduce}).
\begin{theorem}
Denote by $\Sigma_{s,r}$ the set of all matrices in $\R^{N\times K}$ of rank at least $r$ and having at most $s$ non-zero rows ($s$-row sparse). Let  $A\in \R^{M\times N}$  be such that the induced map $Z\mapsto AZ$ is injective on $\Sigma_{s,r}$, $X\in \R^{N\times K}$ be $s$-row sparse, $ Y=AX$ and $\rank (Y)=r$. Then 
 \begin{enumerate}
     \item $Y$ can be written as a product
	$Y=VU$ where  $V\in \R^{M\times r}$ with $\rank (V)=r$ and $U\in \R^{r\times K}$ satisfying $ UU^T=I_r$ where $I_r$ is the identity matrix in $\R^{r\times r}$.
	\item The problem 
\begin{equation}
\label{def:l0_prob}
\argmin_{Z\in \R^{N\times r}}\|Z\|_{0}\ \ \text{  s.t.  }\ \  AZ=V
\end{equation}
has a unique solution $W\in \R^{N\times r}$. This solution is $s$-row sparse and has full column rank.
\item The original matrix $X$ can be computed by $X= WU.$
 \end{enumerate}
 Here, $\|Z\|_0$ is the number of non-zero rows of matrix $Z$. 
 \end{theorem}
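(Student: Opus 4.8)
The plan is to reduce everything to the single auxiliary matrix $W:=XU^{T}$ once the factorization of part (1) is fixed, after which the injectivity hypothesis is invoked twice. For part (1) I would read off $V$ and $U$ from a thin singular value decomposition $Y=\Phi\Sigma\Psi^{T}$, where $\Phi\in\R^{M\times r}$ and $\Psi\in\R^{K\times r}$ have orthonormal columns and $\Sigma\in\R^{r\times r}$ is diagonal and invertible (this is where $\rank Y=r$ enters). Setting $V:=\Phi\Sigma$ and $U:=\Psi^{T}$ yields $Y=VU$, $\rank V=r$ (since $\Sigma$ is invertible and $\Phi$ has full column rank), and $UU^{T}=\Psi^{T}\Psi=I_{r}$. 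Equivalently, from any rank-$r$ factorization $Y=V_{0}U_{0}$ the Gram matrix $P:=U_{0}U_{0}^{T}$ is positive definite, and $V:=V_{0}P^{1/2}$, $U:=P^{-1/2}U_{0}$ work.

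Next I would record the properties of $W:=XU^{T}$. Right-multiplying $AX=Y=VU$ by $U^{T}$ and using $UU^{T}=I_{r}$ gives $AW=YU^{T}=VUU^{T}=V$, so $W$ is feasible for \eqref{def:l0_prob}. A zero row of $X$ forces a zero row of $XU^{T}$, hence $\supp W\subseteq\supp X$ and $\|W\|_{0}\le\|X\|_{0}\le s$. For the rank, $\rank W\le\rank U^{T}=r$ while $AW=V$ forces $\rank W\ge\rank V=r$; therefore $\rank W=r$, i.e.\ $W$ has full column rank and $W\in\Sigma_{s,r}$. The same estimate $\rank X\ge\rank(AX)=\rank Y=r$ shows $X\in\Sigma_{s,r}$, which is what allows the injectivity hypothesis to be applied to $X$ itself.

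Parts (2) and (3) then follow from the injectivity of $Z\mapsto AZ$ on $\Sigma_{s,r}$. For (2): the feasible set of \eqref{def:l0_prob} is nonempty (it contains $W$) and $\|\cdot\|_{0}$ takes values in $\{0,\dots,N\}$, so a minimizer $Z^{\star}$ exists; every minimizer satisfies $\|Z^{\star}\|_{0}\le\|W\|_{0}\le s$, and $AZ^{\star}=V$ forces $\rank Z^{\star}\ge\rank V=r$, so $Z^{\star}\in\Sigma_{s,r}$; since $AZ^{\star}=V=AW$ with both matrices in $\Sigma_{s,r}$, injectivity gives $Z^{\star}=W$, which is $s$-row sparse of full column rank as already shown. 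For (3): $A(WU)=(AW)U=VU=Y=AX$, and $WU$ is $s$-row sparse ($\supp(WU)\subseteq\supp X$) with $r=\rank Y=\rank(A(WU))\le\rank(WU)\le\rank W=r$, so $WU\in\Sigma_{s,r}$; applying injectivity to $WU$ and $X$ yields $X=WU$.

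I do not expect a genuine obstacle. The one point to handle with care is that the full-column-rank property of $W$ (and, implicitly, the identity $\rank X=r$) is not a hypothesis but is derived: it comes from pairing the trivial bounds $\rank W\le r$ and $\rank(WU)\le r$ with the facts that $AW=V$ and $A(WU)=Y$ already have rank $r$. Everything else is bookkeeping with row supports and the identity $UU^{T}=I_{r}$; the degenerate case $r=0$, where $Y=0$ and injectivity forces $X=0$, can be set aside.
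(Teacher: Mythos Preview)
Your proof is correct and follows essentially the same route as the paper's: build the factorization $Y=VU$ from a thin SVD, set $W:=XU^{T}$, verify $W\in\Sigma_{s,r}$ is feasible for the reduced $\ell_0$ problem, and then use the injectivity hypothesis to get uniqueness in parts (2) and (3). The only cosmetic difference is that the paper packages the step ``injectivity on $\Sigma_{s,r}$ $\Rightarrow$ every element of $\Sigma_{s,r}$ is the unique $\ell_0$ minimizer'' as a separate preliminary equivalence (its Theorem~\ref{rankrecequiv}), together with the rank-preservation fact $\rank X=\rank(AX)$ (its Proposition~\ref{prop:prelim}), and then simply cites these; you instead inline those arguments directly, deriving $\rank W=r$ and $\rank X\ge r$ from $\rank V=\rank Y=r$ and showing any minimizer lands in $\Sigma_{s,r}$ by hand. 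Both amount to the same proof.
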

This result helps to reduce the computational complexity of the original problem and allows a simple implementation of joint sparse recovery algorithms for full-rank setting such as the MUSIC algorithm \cite{Schmidt79,Feng98,FY1996} to the rank deficient case. {We note that a similar reduction approach has been presented in \cite[Section III]{mishali2008reduce} { and \cite{malioutov2005sparse}}. However, in that work, the authors split the problem into two parts: first finding the support by solving the reduced problem, and then finding the unknown $X$ using the knowledge of the support, whereas we have more complete result utilizing the rank. } 

As a relaxation of the minimization problem \eqref{def:l0_prob} in the theorem above, we propose a new method for the recovery of jointly sparse vectors in the form of  a non-convex  optimization problem on the non-compact Stiefel manifold: 
\begin{equation}
\label{def:constr_prob}
\argmin_{Z\in \R^{N\times r},\; \rank(Z)=r}\|Z(Z^TZ)^{-\frac{1}{2}}\|_{2,1}\ \ \text{  s.t.  }\ \  AZ=V.
\end{equation}
{For the definition of $\ell_{2,1}$ norm, see \eqref{def:21norm}}.
Our method generalizes the $\ell_1/\ell_2$ minimization method studied in \cite{ELX13,hoyer04,KTF11,rahimi2018scale,yin2014ratio}. We demonstrate in our numerical examples that it outperforms the $\ell_{2,1}$ minimization commonly used to solve the joint sparse recovery problem {in the sense that much fewer measurements are required for accurate sparse reconstructions.} 

For numerical examples, we consider the unconstrained version of \eqref{def:constr_prob}
\begin{equation}
\label{def:unconstr_prob}
\argmin_{Z\in \R^{N\times r},\; \rank(Z)=r}\|Z(Z^TZ)^{-\frac{1}{2}}\|_{2,1}+ \frac{\lambda}{2}\|AZ-V\|^2_F,
\end{equation}
where $\|\cdot\|_{F}$ is the Frobenius norm. { 
Note that \eqref{def:unconstr_prob} is the Lagrangian form of the equivalent problem
\[\argmin_{Z\in \R^{N\times r},\; \rank(Z)=r}\|Z(Z^TZ)^{-\frac{1}{2}}\|_{2,1}\text{ s.t. } \|AZ-V\|^2_F\leq \epsilon\]
for some other parameter $\epsilon>0$, that measures the noise level in the measurements. Just like the $\epsilon$, the choices of tuning parameter $\lambda>0$  are problem dependent, with small values of $\epsilon$ corresponding to large values of $\lambda$, and often determined empirically (see \cite{dalalyan2017prediction,lederer2015don} for more detailed discussion). By choosing large $\lambda$, ensures equation \eqref{def:unconstr_prob} gives the same solution as \eqref{def:constr_prob}. } 
Currently, there are several packages available for manifold optimization (see \cite{huang2018roptlib} for a detailed list). We give preference to the Pymanopt package for its automatic differentiation capabilities \cite{JMLR2016}. It is the Python version of the original Manopt package \cite{manopt,vandereycken2013lowrank} written for MATLAB. We also smooth out the $\ell_{2,1}$ norm with the Huber function to be able to apply the conjugate gradient algorithm in Pymanopt. For users of the Manopt package, calculation of derivatives is needed, and we will explicitly compute the Euclidean gradient of the objective function in \eqref{def:unconstr_prob} as well as the gradient of its Huber regularized version.

The paper is organized as follows. For the rest of Section \ref{sec:intro}, we review the single sparse and joint sparse recovery problems. Section \ref{sec:prelim} provides background results on the rank-awareness of the $\ell_0$ minimization problem for matrices. Section \ref{sec:reduce} discusses a simple strategy to reduce the rank deficient problems to the full-rank setting. We relax the $\ell_0$ minimization to a manifold optimization problem in Section \ref{sec:manopt}. Section \ref{sec:huber} discusses the Huber regularization. Calculation of the derivative of our objective functions will be presented in Section \ref{sec:grad}. Finally, numerical example is given in Section \ref{sec:exp}.  
 
 \subsection{Sparse vector recovery problem}
Many signals of interest have (almost) sparse representation in some basis. For instance, images typically have sparse representations in wavelet bases, digital audio signals have sparse representation in Gabor systems, etc. Let  $x\in\R^N$ be the coefficients of the signal in such a basis and $y\in \R^m$ be a vector of linear measurements of the signal. Via the Riesz theorem, the relationship between $x$ and $y$ can be written as $y=Ax$ where the matrix $A$ can be explicitly computed.

In classical sparse sampling and recovery problem, the goal is to find the vector 
$x\in \R^N$ from the system
$Ax=y$ where $A\in\R^{M\times N}$, under the assumption that the vector $x$ is {sparse}. Even though the number $s$ of non-zero entries in $x$ can be much smaller than the dimension $N$, the indices of these entries are unknown, and directly solving the system by first finding the non-zero entry indices  of $x$ is an NP-hard problem \cite{Natarajan95}. Other methods have been suggested for solving it, one of the most popular is the $\ell_1$ minimization problem
$$\argmin_{z\in \R^{N}}\|z\|_{1} \ \ \text{ s.t. }\ \  Az=y,
$$
or its unconstrained version 
$$\argmin_{z\in \R^{N}} \|z\|_{1} +\frac{\lambda}{2}\|Az-y\|^2_{2}.
$$
 
It has been proved that under some conditions on the matrix $A$, specifically the null space property (NSP) and the restricted isometry property (RIP), these methods are  able to recover the vector $x$ exactly and stably.

 \subsection{The joint sparse recovery problem}\label{sec:jointsparseprob} In the joint sparse recovery problem we want to solve a matrix equation of the form 
 \begin{equation}\label{jointsparseprob}
     AX=Y, \;\; A\in \R^{M\times N},\;\; X\in\R^{N\times K}\text{ and } Y\in \R^{M\times K},
 \end{equation}
given that $X\in\R^{N\times K}$ is a $s$-row sparse matrix, i.e., if $X=(X[1],\dots,X[N])^T$ then $X[i]$'s are all zero for all indices $i$'s outside some subset of $\{1,\dots,N\}$ of size $s$.
  
   Our goal is to find a method to exactly recover $X$ from \eqref{jointsparseprob}, where the matrix $A\in \R^{M\times N}$ is chosen with small $M$. One expects that when increasing the number $K$ of unknown jointly sparse vectors, a high rank of $X$ will result in a reduction of $M$ which corresponds to the number of linear measurements done on the vectors. Define the $\ell_{2,1}$ norm of $X$
   \begin{equation}\label{def:21norm}
   \|X\|_{2,1}=\sum_{n=1}^{N}\|X[n]\|_2.
   \end{equation}
Similar to the case of single vector recovery, the following convex minimization problem has been widely used in place of directly solving for sparsest solution of \eqref{jointsparseprob} 
\begin{equation}\label{l21min}
\argmin_{Z\in \R^{N\times K}}\|Z\|_{2,1} \text{ s.t. } AZ=Y.
\end{equation}
In the worst case (uniform recovery) scenario, however, this method does not allow measurement reduction when the number of vectors is increased. As the next theorem shows, null space property is a necessary and sufficient condition for both the single sparse recovery via $\ell_1$ minimization and the joint sparse recovery via $\ell_{2,1}$ minimization. Specifically, if $A$ fails to recover an $s$-sparse vector with $\ell_1$ minimization, then it will fail to recover some $s$-row sparse matrix via \eqref{l21min}, no matter how high the rank of the unknown matrix is. As such, the $\ell_{2,1}$ minimization is known to be a \textit{rank blind} algorithm.

\begin{theorem}[\cite{LaiLiu, MR2918014}]\label{nspcond}
    Given a matrix $A\in \R^{M\times N}$, every  s-row sparse matrix $X\in \R^{N\times K}$  can be exactly recovered from \eqref{l21min} if and only if $A$ satisfies the NSP: 
    $$
    \|x_I\|_{1}<\|x_{I^c}\|_{1},\ \forall x\in \ker(A)\setminus \{\mathbf{0}\},\ \forall I\subset\{1,\dots,N\}, \text{ with } |I|\leq s,
    $$ 
    where $ x_{I} $ is the vector that we get after nullifying all the entries of $ x $ except the ones with indexes in $ I $. Furthermore, if there exists an $ x\in \ker(A)$ such that $$\|x_I\|_1>\|x_{I^c}\|_1,\; |I|\leq s,$$
 then, for any $1\leq r\leq \min\{s,K\}$, there is an s-row sparse matrix $X\in \R^{N\times K}$ with $\rank(X)=r$ which cannot be recovered from \eqref{l21min}. 
\end{theorem}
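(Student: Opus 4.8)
\emph{Plan.} I would follow the classical route ``null-space property $\Leftrightarrow$ uniform recovery'', lifted to the matrix setting, and then handle the final ``Furthermore'' (rank-blindness) assertion by an explicit quantitative construction. For the ``if'' direction, I would first upgrade the (vector) NSP to its matrix form: \emph{for every $H\in\R^{N\times K}$ with $AH=\mathbf{0}$, $H\neq\mathbf{0}$, and every $S$ with $|S|\leq s$, one has $\sum_{n\in S}\|H[n]\|_2<\sum_{n\notin S}\|H[n]\|_2$}, where $H[n]$ denotes the $n$-th row. This follows by averaging over the sphere: for $\xi\in\St^{K-1}\subset\R^K$ the vector $H\xi$ lies in $\ker(A)$, has $n$-th entry $\langle H[n],\xi\rangle$, and by rotational invariance $\EE_\xi|\langle H[n],\xi\rangle|=\kappa_K\|H[n]\|_2$ for a constant $\kappa_K>0$ depending only on $K$; since $H\neq\mathbf{0}$, the set $\{\xi : H\xi=\mathbf{0}\}$ lies in a proper subspace and is therefore a null set, so the vector NSP applied to $H\xi$ gives $\|(H\xi)_S\|_1<\|(H\xi)_{S^c}\|_1$ for almost every $\xi$, and integrating and dividing by $\kappa_K$ gives the matrix NSP. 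The recovery step is then routine: if $X$ is $s$-row sparse with support $S$ and $AZ=AX$ with $Z\neq X$, put $H=Z-X$ and estimate $\|Z\|_{2,1}\geq\|X\|_{2,1}-\sum_{n\in S}\|H[n]\|_2+\sum_{n\notin S}\|H[n]\|_2>\|X\|_{2,1}$, so $X$ is the unique minimizer.

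For the ``only if'' direction, given $x\in\ker(A)\setminus\{\mathbf{0}\}$ and $|I|\leq s$, I would test the recovery of the $s$-row sparse matrix $X=x_I\,e_1^T$ (first column $x_I$, others zero), for which $\|X\|_{2,1}=\|x_I\|_1$. Since $Ax=\mathbf{0}$ gives $Ax_I=-Ax_{I^c}$, the matrix $Z=-x_{I^c}\,e_1^T$ satisfies $AZ=AX$ and differs from $X$ (their difference is $x\,e_1^T\neq\mathbf{0}$), while $\|Z\|_{2,1}=\|x_{I^c}\|_1$; uniqueness of the recovered minimizer forces $\|x_I\|_1<\|x_{I^c}\|_1$, which is the NSP.

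For the ``Furthermore'' part, fix $r$ with $1\leq r\leq\min\{s,K\}$. First, by enlarging $I$ if necessary — which keeps $\|x_I\|_1>\|x_{I^c}\|_1$, since moving an index into $I$ does not decrease $\|x_I\|_1$ and does not increase $\|x_{I^c}\|_1$ — I may assume $|I|\geq r$. Pick an orthonormal system $w,w_1,\dots,w_{r-1}$ in $\R^K$ (possible since $r\leq K$) and $r-1$ distinct indices $i_1,\dots,i_{r-1}\in I$, and for a small $\epsilon>0$ let $X$ be the $s$-row sparse matrix supported on $I$ with $X[i_j]=s_{i_j}w+\epsilon w_j$ for $j=1,\dots,r-1$ and $X[n]=s_n w$ for the other $n\in I$, where $s_n=\sign(x_n)$ with the convention $\sign(0)=1$. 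Every such row is nonzero and lies in $\operatorname{span}\{w,w_1,\dots,w_{r-1}\}$, and (using $|I|\geq r$) the rows span that whole $r$-dimensional space, so $\rank(X)=r$ for $\epsilon\neq0$. The feasible set of \eqref{l21min} with $Y=AX$ contains all $X+x\,d^T$, $d\in\R^K$, and along this family $\|X+x\,d^T\|_{2,1}=\phi(d):=\sum_{n\in I}\|X[n]+x_n d\|_2+\|x_{I^c}\|_1\|d\|_2$. The convex function $\phi$ has one-sided directional derivative at $d=0$ in the direction $-g$, with $g:=\sum_{n\in I}x_n\,X[n]/\|X[n]\|_2$, equal to $\|g\|_2\bigl(\|x_{I^c}\|_1-\|g\|_2\bigr)$. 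Since $g\to\|x_I\|_1\,w$ as $\epsilon\to0$ and $\|x_I\|_1>\|x_{I^c}\|_1$, for $\epsilon$ small enough $\|g\|_2>\|x_{I^c}\|_1$ and this derivative is negative; hence for small $t>0$ the feasible matrix $X-t\,x\,g^T$ differs from $X$ and has strictly smaller $\ell_{2,1}$ norm, so $X$ cannot be recovered from \eqref{l21min}.

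The first two parts are essentially bookkeeping around known arguments; the genuine difficulty is the ``Furthermore'' part. The obvious attempt — choosing $X$ so that a null-space perturbation \emph{exactly} cancels its dominant rows — forces those rows to be collinear and yields only a rank-$1$ counterexample. The remedy is \emph{approximate} cancellation: the $O(\epsilon)$ tilt of $r-1$ rows lifts the rank to $r$ while moving $g$ only by $O(\epsilon)$, which is harmless precisely because $\|x_I\|_1>\|x_{I^c}\|_1$ is a strict inequality, hence stable under small perturbations. A minor point to get right is the measurability and the preservation of strictness under the integral in the averaging step, which holds because $\{\xi : H\xi=\mathbf{0}\}$ has measure zero.
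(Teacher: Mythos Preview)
The paper does not supply a proof of this theorem: it is quoted from \cite{LaiLiu,MR2918014} and used as background, so there is no ``paper's own proof'' to compare against. That said, your argument is correct and self-contained.

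A few remarks. Your averaging trick for the ``if'' direction---applying the vector NSP to $H\xi$ for $\xi$ uniform on the sphere and using $\EE_\xi|\la H[n],\xi\ra|=\kappa_K\|H[n]\|_2$---is a clean way to lift the scalar NSP to the row-$\ell_2$ version; the strictness survives integration exactly for the reason you note, since $\{\xi:H\xi=0\}$ is a sphere slice of a proper subspace. The ``only if'' direction via the rank-one test matrix $x_I e_1^T$ is standard and correctly executed.

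Your treatment of the ``Furthermore'' clause is the interesting part and is done carefully. The key observation---that an exact cancellation construction forces collinear rows and hence rank one, so one must perturb $r-1$ rows by $\epsilon w_j$ to lift the rank while the descent direction $g=\sum_{n\in I}x_n X[n]/\|X[n]\|_2$ moves only by $O(\epsilon)$ from $\|x_I\|_1\,w$---is exactly right, and the one-sided directional derivative computation $\phi'(0;-g)=\|g\|_2(\|x_{I^c}\|_1-\|g\|_2)$ is correct. Two small points worth making explicit in a write-up: (i) after enlarging $I$ you should state that you keep $|I|\le s$ (you can, since $r\le s$), and (ii) the rank argument uses that at least one row of $X$ on $I$ is of the ``unperturbed'' form $s_n w$, which requires $|I|\ge r$; you secured this, but it is the place a reader might stumble.
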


There have been several efforts to exploit the rank of the unknown matrix and design \textit{rank aware} algorithms to improve the  joint sparse recovery results. Examples of such algorithms include those based on MUSIC \cite{FY1996,lee2012subspace} and orthogonal matching pursuit \cite{MR2918014}. On the other hand, we are not aware of available rank aware algorithms based on (convex) functional optimization, CoSaMP or thresholding approaches.

One may think of resolving the rank blindness issue of $\ell_{2,1}$ minimization by replacing the $\ell_{2,1}$ norm with a different functional, i.e., consider the problem  
\begin{equation}\label{convexcond}
	\argmin_{Z\in \R^{N\times K}} \phi(Z)\;\;\text{   s.t.  } AZ=Y
\end{equation}
where $ \phi $ is a convex functional in $ \R^{N\times K} $. { However, as we point out below, no such convex functional like $\|\cdot\|_{2,1}$ will be rank aware, and we may need to change the geometry or look into non-convex optimization for such desirable property.

First, if \eqref{convexcond} has a unique solution for every $Y=AX$ with $X\in\Sigma_{s,r}$ then, for any $r\leq r^\prime \leq s$ and $X\in \Sigma_{s,r^\prime}$, it has a unique solution too. Hence \eqref{convexcond} is rank aware if it allows unique recovery in the class $\Sigma_{s,r}$ ($r\leq s$) and fails to recover  a matrix $X_0$ in $\Sigma_{s,r^\prime}$ for some $r^\prime<r$.
%
%
Observe that, every $ X\in \Sigma_{s,r} $ is the unique solution of \eqref{convexcond} if and only if
\begin{equation}\label{newinmidnsp}
\phi(X+W)>\phi(X),\ \ \forall  X\in \Sigma_{s,r},\ \forall W\in (\ker(A))^K\setminus\{\mathbf{0}\}. 
\end{equation}
And \eqref{convexcond} strongly fails to recover an $X_0\in\Sigma_{s,r^\prime}$ if and only if there exists a $W_0\in (\ker(A))^K\setminus\{\mathbf{0}\}$ such that $\phi(X_0+W_0)< \phi(X_0)$. However,  by the continuity of $\phi$ and the fact that $\Sigma_{s,r}$ is dense in $\Sigma_{s,r^\prime}$ when $r\geq r^\prime$, \eqref{newinmidnsp} implies that $\phi(X_0+W_0)\geq \phi(X_0)$, a contradiction. As a result, \eqref{convexcond} cannot recover any matrix in $\Sigma_{s,r}$ while strongly fail to recover in the class $\Sigma_{s,r^\prime}$ for $r^\prime \leq r$.}
  
In this paper, we utilize the rank of $Y$ to extract information about  $X$ and reduce the problem to a smaller dimensional problem in full-rank setting. {As a first step towards a rank aware optimization approach for uniform recovery, we develop a new non-convex optimization method on manifold, which achieves a significant reduction in the number of measurements in joint sparse recovery, compared to $\ell_{2,1}$ minimization.} 


\section{Preliminary results}
\label{sec:prelim}
This section revisits the $\ell_0$ minimization for matrices, based on results in \cite{ChenHuo06,MR2918014}. We present a uniqueness condition for the sparse recovery on $\Sigma_{s,r}$, that is, the map $Z\mapsto AZ$ is injective on $\Sigma_{s,r}$. Before we characterize this condition, let us make a few observations. 


\begin{definition}
The spark of the matrix $A$, denoted by $ \spark( A )$, is the smallest number $j$ such that there exists $j$ linearly dependent columns in $A$.
\end{definition}

Note that any $ \spark( A ) -1$ columns of $A$ are linearly independent.

\begin{proposition}
\label{prop:prelim}
If $A:\Sigma_{s,r}\to \R^{M\times K}$ is injective then for every $X\in \Sigma_{s}$, $\rank(X)=\rank(AX).$
\end{proposition}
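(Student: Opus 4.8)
The plan is to use that $\rank(AX)\le\rank(X)$ holds for \emph{every} matrix, so the only content of the proposition is the reverse inequality; equivalently, I must show that injectivity of $Z\mapsto AZ$ on $\Sigma_{s,r}$ precludes a strict rank drop $\rank(AX)<\rank(X)$ for any $s$-row sparse $X$ (the case $X=0$ being trivial). I would argue by contradiction: from such an $X$ I will manufacture two \emph{distinct} matrices in $\Sigma_{s,r}$ with the same image, contradicting the hypothesis.

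The first step is to extract a sparse kernel vector from the rank drop. Writing $U=\text{col}(X)\subseteq\R^N$, one has $\text{col}(AX)=A(U)$, hence $\rank(AX)=\dim U-\dim(\ker A\cap U)$; a strict drop forces $\ker A\cap\text{col}(X)\ne\{0\}$. Pick $w$ in this intersection with $w\ne 0$ and $Aw=0$. Since every column of $X$, and hence every vector of $\text{col}(X)$, is supported on $\supp(X)$, we get $\supp(w)\subseteq\supp(X)$, so $w$ has at most $s$ nonzero entries. This is the engine of the construction: for every $c\in\R^K$ the rank-one matrix $wc^T$ is $s$-row sparse (supported on $\supp(w)$) and is killed by $A$, since $A(wc^T)=(Aw)c^T=0$.

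The second step is the construction. Choose a row-index set $T$ with $\supp(w)\subseteq T$ and $r\le|T|\le s$ — possible because $|\supp(w)|\le s$ and, by the standing non-degeneracy assumption that makes $\Sigma_{s,r}$ nonempty, $r\le\min\{s,K\}$ (and $s\le N$). Extend $w$ to a linearly independent family $w,v_2,\dots,v_r$ of vectors supported on $T$, and let $X_1\in\R^{N\times K}$ be the matrix with columns $w,v_2,\dots,v_r,0,\dots,0$; then $X_1$ is $s$-row sparse of rank exactly $r$, i.e. $X_1\in\Sigma_{s,r}$. Now set $X_2=X_1+wc^T$ for a suitable $c\ne 0$; the representative choice is $c$ equal to the second coordinate vector (available since $K\ge r\ge 2$), which replaces the column $v_2$ by $v_2+w$ and keeps the $r$ nonzero columns linearly independent, so $\rank(X_2)=r$, while $\supp(X_2)\subseteq T$ keeps it $s$-row sparse; hence $X_2\in\Sigma_{s,r}$. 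Finally $X_2\ne X_1$ because $wc^T\ne 0$, yet $AX_2=AX_1+(Aw)c^T=AX_1$, contradicting injectivity on $\Sigma_{s,r}$.

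The only genuinely delicate point is the rank bookkeeping in the second step: one must ensure the perturbation $wc^T$ neither collapses the rank below $r$ nor leaves the matrix unchanged, and that there is enough room in the rows (a set $T$ with $\supp(w)\subseteq T$ and $r\le|T|\le s$) and columns ($K\ge r$) to host a rank-$r$ matrix of the required support. Placing $w$ itself as one of the $r$ basis columns makes the rank preservation transparent and avoids any general-position argument; a robust alternative valid for all $r\ge 1$ is to observe that $\{c\in\R^K:\rank(X_1+wc^T)<r\}$ is a proper algebraic subset of $\R^K$ (it misses $c=0$), so one may choose the needed $c\ne 0$ from its dense complement. I would also flag explicitly that the statement presupposes $\Sigma_{s,r}\ne\emptyset$, i.e. $1\le r\le\min\{s,K\}$ (with $s\le N$), since otherwise the injectivity hypothesis is vacuous and the asserted rank equality has no content.
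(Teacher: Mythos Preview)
Your proof is correct and follows essentially the same strategy as the paper's: extract an $s$-sparse nonzero vector $w$ in $\ker A$ and use it to build two distinct elements of $\Sigma_{s,r}$ with the same image under $A$. The paper packages the first step as the statement $\spark(A)>s$ and for the second uses the slightly cleaner perturbation $\widetilde X=(-w,v_2,\dots,v_r,0,\dots,0)$ (just flip the sign of the kernel column), which sidesteps any case distinction on $r$ or $K$.
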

\begin{proof}
We prove this proposition into two steps: 
\begin{enumerate}[\ \ \ \ \  i)]
\item if $A:\Sigma_{s,r}\to \R^{M\times K}$ is injective then $ \spark(A)> s $,
\item if  $\,\spark(A)> s $ then $\rank (X)=\rank (AX)$, $\forall  X \in \Sigma_s $. 
\end{enumerate}

To prove i), assume otherwise ($ \spark(A)\le s $), there exists an $ I\subseteq \{1,\dots,N\} $, $|I|=s$ and vectors $ v_1,\dots,v_r\in \R^N $ which are linearly independent and supported on $ I $ such that $ Av_1=0 $. Take the matrices  $$ X=(v_1,\cdots,v_r,\overbrace{0,\cdots,0}^{K-r}) ,\;\; \widetilde X=(-v_1,\cdots,v_r,\overbrace{0,\cdots,0}^{K-r}) .$$ Then $ X,\widetilde X\in \Sigma_{s,r} $, $ X\neq \widetilde X $ and $ AX=A\widetilde X $, which contradicts the injectivity of $A$. 

For ii), suppose $\rank(X)>\rank(AX)$ for $X\in \Sigma_s$. Then, there exists a vector $x\neq 0$ in the span of columns of $X$ such that $Ax=0$. $x$ is $s$-sparse, hence we found a non-trivial combination of $s$ columns of $A$ that is equal to $0$, contradicting the assumption that $\spark(A)>s$.
\end{proof}
 
The uniqueness condition for the sparse recovery on $\Sigma_{s,r}$ is characterized in the following theorem. 
 
\begin{theorem}\label{rankrecequiv}
The following statements are equivalent:
\begin{enumerate}
 \item $A:\Sigma_{s,r}\to \R^{M\times K}$ is injective,
\item $ s\leq\dfrac{\spark(A)-1+r}{2} $,
\item For every $ X\in \Sigma_{s,r}$, $ X $ is the unique solution of the minimization problem 
	\begin{equation}\label{equivrankcond}
		\min_{Z\in \R^{N\times K}} \|Z\|_{0}\text{  s.t.  } AZ=Y , 
	\end{equation}
	where $ Y=AX $.
\end{enumerate}
\end{theorem}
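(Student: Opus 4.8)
The plan is to prove the cyclic chain of implications $(1)\Rightarrow(2)\Rightarrow(3)\Rightarrow(1)$, which gives equivalence. The most natural order is probably $(2)\Rightarrow(1)\Rightarrow(3)\Rightarrow(2)$, but let me think about which intermediate implications are cleanest.

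For $(2)\Rightarrow(1)$: suppose $s\le\frac{\spark(A)-1+r}{2}$ and $AX=AX'$ with $X,X'\in\Sigma_{s,r}$. Then $W:=X-X'$ has at most $2s$ nonzero rows and lies in $(\ker A)^K$. The idea: since $X$ has rank $\ge r$, any matrix $X'$ with $AX'=AX$ must agree with $X$ on a large subspace of column-space, or more precisely I want to bound $\rank(W)$. Actually the cleaner route: show $\spark(A)>s$ from condition (2) would be too weak; instead observe that $W$ is supported on at most $2s\le \spark(A)-1+r$ rows. I need to argue $W=0$. The key observation is that $AW=0$ with $W$ supported on $\le 2s$ rows forces, if $W\ne 0$, that these $\le 2s$ columns of $A$ contain a dependency involving at most... hmm, this needs the rank-$r$ hypothesis to enter. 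Let me reconsider: if $W\ne0$ then $\rank(W)\ge 1$; columns of $W$ span a space inside $\ker A$ restricted to the support. Write the support $S$, $|S|\le 2s$. Then $A_S$ (columns indexed by $S$) has a nullspace of dimension $\ge \rank(W)=:\rho\ge 1$, so $A_S$ has at most $|S|-\rho$ independent columns, whence $\spark(A)\le |S|-\rho+1\le 2s-\rho+1$. Combined with $\rank(X)\ge r$ and $\rank(X')=\rank(X-W)\ge r$: I expect a relation $\rho \ge \rank(X)-\rank(X')$-type bound isn't immediately it. The right fact is likely: $\rank(X)\le \rank(X')+\rho$ and $\rank(X')\le\rank(X)+\rho$ are too weak; instead one shows directly $\rho\ge r$ is \emph{not} needed — rather, if $W\ne0$ we get $s\le\frac{2s-\rho+1-1+r}{2}$... that rearranges to $\rho\le r$, no contradiction. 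So I suspect the actual argument splits on whether $W=0$ by a sharper support/rank count; I will need to track that $AX'=AX$ with both rank $\ge r$ forces the dependency among the $\le 2s$ columns to have corank $\ge r$, i.e. $\spark(A)\le 2s-r+1$, contradicting (2) unless $W=0$. Establishing $\mathrm{corank}\ge r$ from the rank hypotheses is the step I expect to be the main obstacle.

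For $(1)\Rightarrow(3)$: Suppose $A$ is injective on $\Sigma_{s,r}$ and $X\in\Sigma_{s,r}$, $Y=AX$. If $Z$ solves \eqref{equivrankcond}, then $\|Z\|_0\le\|X\|_0\le s$, so $Z\in\Sigma_s$; and $AZ=AX=Y$. By Proposition~\ref{prop:prelim}, $\rank(Z)=\rank(AZ)=\rank(Y)=\rank(AX)=\rank(X)\ge r$, so $Z\in\Sigma_{s,r}$. Then injectivity of $A$ on $\Sigma_{s,r}$ forces $Z=X$, giving uniqueness.

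For $(3)\Rightarrow(2)$ (contrapositive): suppose $s>\frac{\spark(A)-1+r}{2}$, i.e. $2s\ge \spark(A)+r$. I want to build an $X\in\Sigma_{s,r}$ whose $\ell_0$ problem has a second minimizer. Take $p:=\spark(A)$ linearly dependent columns and split them so as to create $X$ with $s$ rows of support and rank $\ge r$, together with a perturbation $W\in(\ker A)^K$ with $\le 2s$ rows making $X+W$ also $s$-row-sparse with no larger $\ell_0$; the bound $2s\ge p+r$ is exactly what gives enough room to place the support and inject rank $r$ simultaneously. I will mimic the two-matrix construction from the proof of Proposition~\ref{prop:prelim}(i), enlarging it to encode rank $r$. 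Wrapping up: since $(2)\Rightarrow(1)\Rightarrow(3)$ and $(3)\Rightarrow(2)$, all three are equivalent. The sharp bookkeeping of support size versus rank in the $(2)\Rightarrow(1)$ and $(3)\Rightarrow(2)$ directions is where the real work lies; the $(1)\Rightarrow(3)$ direction is essentially immediate from Proposition~\ref{prop:prelim}.
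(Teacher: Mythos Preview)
Your $(1)\Rightarrow(3)$ argument is correct and is exactly what the paper does (invoking Proposition~\ref{prop:prelim} to get $\rank(\widetilde X)\ge\rank(Y)=\rank(X)\ge r$, then using injectivity). The rest of your plan, however, diverges from the paper and contains a real gap.

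The paper does not attempt $(2)\Rightarrow(1)$ or $(3)\Rightarrow(2)$ directly. For $(1)\Leftrightarrow(2)$ it simply cites \cite[Theorem~2]{MR2918014}; and to close the cycle it proves $(3)\Rightarrow(1)$, which is a one-liner: if $X,\widetilde X\in\Sigma_{s,r}$ satisfy $AX=A\widetilde X=Y$, then by hypothesis (3) applied to $X$, the matrix $X$ is the unique minimizer of \eqref{equivrankcond}, and by (3) applied to $\widetilde X$, so is $\widetilde X$; hence $X=\widetilde X$. You should use this instead of your sketched $(3)\Rightarrow(2)$ construction, which you left incomplete.

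More importantly, your route to $(2)\Rightarrow(1)$ does not work as written. You correctly observe that $W=X-X'$ has row support $S$ with $|S|\le 2s$ and that $\dim\ker A_S\ge\rank(W)$, giving $\spark(A)\le 2s-\rank(W)+1$. But then you need $\rank(W)\ge r$ to reach a contradiction with $\spark(A)\ge 2s-r+1$, and this is precisely what fails: there is no reason the \emph{difference} $X-X'$ should have rank at least $r$ merely because $X$ and $X'$ individually do. For instance, when $K>r$ one can have $X$ and $X'$ differ in a single column while both retain rank $r$, so $\rank(W)=1$. The ``corank $\ge r$'' step you flagged as the main obstacle is not just an obstacle---it is false in general, and the argument in \cite{MR2918014} proceeds differently (working with the column space of $X$ itself rather than the rank of $W$). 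Either cite that result, as the paper does, or replace this step with the correct argument from that reference.
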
	

\begin{proof}
$ (1)\Leftrightarrow (2) $ follows as an intermediate consequence of \cite[Theorem 2]{MR2918014}.

	$ (1)\Rightarrow(3) $ Suppose  $A:\Sigma_{s,r}\to \R^{M\times K}$ is injective. From Proposition \ref{prop:prelim}, $\rank(Y)=\rank(X)\ge r$. Let $ \widetilde X $ be a solution of \eqref{equivrankcond}, then $ \|\widetilde X\|_0\leq \|X\|_0=s $. Also, since $ A\widetilde X=Y $, $\rank(\widetilde X)\geq \rank(Y) \ge r$, hence $\widetilde X\in \Sigma_{s,r}$. From the injectivity of $A$, we have $X=\widetilde X$, and $X$ is the unique solution of \eqref{equivrankcond}.
	
	
	$ (3)\Rightarrow(1) $  Suppose  $X,\widetilde X\in \Sigma_{s,r}$ and $ AX=A\widetilde X=Y $. Then both of them are the unique solutions of the same minimization problem, therefore $ X=\widetilde X $. 
\end{proof}

Theorem \ref{rankrecequiv} shows that the $\ell_0$ minimization \eqref{equivrankcond} is rank aware. Indeed, matrices $X$ with larger rank allow $A$ with smaller $\spark(A)$, therefore can be reconstructed from fewer measurements. Alternatively, for a fixed $A$, the upper bound of the sparsity of unknown matrices that can be exactly recovered grows linearly with its rank. In general, the $\ell_0$ norm as a functional is not suitable for computations. We present a relaxation of this problem using manifold optimization approach in Section \ref{sec:manopt}. Immediately below is a useful idea to reduce matrix equations to full-rank setting.  

\section{Reduction to full-rank setting}
\label{sec:reduce}
We present a simple way to transform the matrix equation to be solved into one where the unknown is full-rank matrix. At the expense of an additional decomposition of output data, the benefit of this practice is twofold. First, we reduce the dimension of the matrix equation and now find solutions in a potentially much lower dimensional space $\R^{N\times r}$, compared to the original one in $\R^{N\times K}$. More importantly, this step allows us to apply and exploit the strength of several joint sparse recovery methods which are particularly powerful in the full-rank case to the more general setting with possible rank defect. 

Our main result in this section is the following. 
\begin{theorem}
\label{reductionthm}
Let {$X\in \R^{N\times K}$ be $s$-row sparse}, $A\in \R^{M\times N}$ such that $A:\Sigma_{s,r}\to \R^{M\times K}$ is injective and $ Y=AX$ {with $\rank (Y)=r$}.
 Then 
 \begin{enumerate}[\ \ \ \ \ i)]
     \item $Y$ can be written as a product
	$Y=VU$ where  $V\in \R^{M\times r}$ with $\rank (V)=r$ and $U\in \R^{r\times K}$ satisfying $ UU^T=I_r$ where $I_r$ is the identity matrix in $\R^{r\times r}$.
	\item The problem 
\begin{equation}
 \label{fullrankequiveq1}
\argmin_{Z\in \R^{N\times r}}\|Z\|_{0}\text{  s.t.  } AZ=V
\end{equation}
has a unique solution $W\in \R^{N\times r}$. This solution is $s$-row sparse and has full column rank.
\item The original matrix $X$ can be computed by $X= WU.$
 \end{enumerate}
 \end{theorem}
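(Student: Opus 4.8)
The plan is to construct explicit objects for parts i) and ii) and then glue them together with the rank-preservation property of $A$. For part i), I would take the rows of $U\in\R^{r\times K}$ to be an orthonormal basis of the $r$-dimensional row space of $Y$, so that $UU^T=I_r$, and set $V:=YU^T$. Since $U^TU$ is the orthogonal projection onto $\mathrm{row}(U)=\mathrm{row}(Y)$, it fixes every row of $Y$, whence $Y=YU^TU=VU$. From $Y=VU$ we get $r=\rank Y\le\rank V$, while $V\in\R^{M\times r}$ forces $\rank V\le r$; hence $\rank V=r$.

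Two preliminary observations will drive parts ii) and iii). First, since $X$ is $s$-row sparse we have $X\in\Sigma_s$, so Proposition \ref{prop:prelim} gives $\rank X=\rank(AX)=\rank Y=r$. Second, the rows of $Y=AX$ are linear combinations of the rows of $X$, so $\mathrm{row}(Y)\subseteq\mathrm{row}(X)$; both spaces having dimension $r$, they coincide, and therefore $U^TU$ fixes every row of $X$ as well, i.e. $XU^TU=X$.

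For part ii), the natural candidate is $W_0:=XU^T$. It satisfies $AW_0=AXU^T=YU^T=V$; it is $s$-row sparse, because a zero row of $X$ produces a zero row of $W_0$; and $r=\rank V=\rank(AW_0)\le\rank W_0\le r$, so $W_0$ has full column rank, i.e. $W_0\in\Sigma_{s,r}$ viewed inside $\R^{N\times r}$. To upgrade this to the statement that $W_0$ is \emph{the unique} $\ell_0$-minimizer in \eqref{fullrankequiveq1}, I would invoke Theorem \ref{rankrecequiv}: injectivity of $Z\mapsto AZ$ on $\Sigma_{s,r}$ is equivalent to the spark bound $s\le(\spark(A)-1+r)/2$, a condition that does not mention the number of columns; hence $A$ is injective on $s$-row sparse rank-$r$ matrices in $\R^{N\times r}$ as well, and Theorem \ref{rankrecequiv}(3), applied with the column count equal to $r$, yields that $W_0$ is the unique solution of \eqref{fullrankequiveq1}. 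Setting $W:=W_0$, part iii) is then immediate from the second observation above: $WU=XU^TU=X$.

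The only genuinely delicate points are the two transfer steps. One is that injectivity of $Z\mapsto AZ$ (equivalently, the spark bound) passes from $\R^{N\times K}$ down to the reduced space $\R^{N\times r}$, which is precisely what licenses applying Theorem \ref{rankrecequiv} to \eqref{fullrankequiveq1}. The other is the identity $\mathrm{row}(X)=\mathrm{row}(Y)$, which rests on the rank-preservation Proposition \ref{prop:prelim} and is exactly what makes $X=WU$ hold rather than merely $AX=A(WU)$. Everything else is routine linear algebra.
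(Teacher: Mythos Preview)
Your proof is correct and follows essentially the same approach as the paper: the same candidate $W=XU^T$ and the same appeal to Theorem~\ref{rankrecequiv} for uniqueness in part~ii). Two small differences are worth noting. First, you make explicit the transfer of injectivity from $\R^{N\times K}$ to $\R^{N\times r}$ via the column-independent spark condition, which the paper leaves implicit when it invokes Theorem~\ref{rankrecequiv} on the reduced problem. Second, for part~iii) the paper argues that $A(WU)=Y$ and $\|WU\|_0\le\|X\|_0$, then appeals once more to uniqueness (injectivity on $\Sigma_{s,r}$) to conclude $WU=X$; your route via $\mathrm{row}(X)=\mathrm{row}(Y)$ and hence $XU^TU=X$ is more direct and avoids this second appeal.
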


\begin{proof} Since $\rank(X) = r$, we have from Proposition \ref{prop:prelim} that $\rank(Y)= r$. The decomposition of $Y$ in i) is well-known. We note that this decomposition is not unique but one such representation can be computed from the compact singular value decomposition of $Y$ or the RQ decomposition.

For ii), using the fact that $ UU^T=I_r$, we get $V=YU^T=AXU^T$. Also, $XU^T$ is $s$-row sparse and $\rank(XU^T)=\rank(X)=r$. From Theorem \ref{rankrecequiv}, $W=XU^T$ is the unique solution to  \eqref{fullrankequiveq1}.
 
For iii), $AWU=VU=Y$ and $\|WU \|_0=\|X\|_0$, hence $X= WU$.
\end{proof}
{\begin{remark}\label{rmrk:factor}
Other decompositions $Y=VU$ of $Y$ can also be considered. As long as the columns of $V$ span the same subspace, we will have the same result as above. 
\end{remark}}

{Theorem \ref{reductionthm} shows that instead of directly solving problem \eqref{equivrankcond}, one can solve the reduced problem \eqref{fullrankequiveq1}, where the unknown matrix has full rank. Then, the solution of original problem can be obtained with one matrix multiplication. Note that for actual data that contains noise, we can compute the SVD decomposition of $Y$ and threshold the eigenvalues to get the essential rank of the matrix. Moreover, if $N$ is too large, and we have an estimate on the sparsity $s$, we may consider  the truncated SVD decomposition instead with first $s$ largest singular values. This may reduce the computational cost dramatically when $s\ll N$.

Notice that if $\widetilde W$ is an approximate solution of \eqref{fullrankequiveq1}, since the columns of $U$ make up a Parseval frame, we have $\|WU-\widetilde WU\|_{F}=\|W-\widetilde W\|_{F}$. Thus the approximation error in solving the reduced problem does not result in a larger (in $\ell_2$ sense) error for the solution of the original problem.  }
  
%
%

\section{Non-convex manifold optimization for joint sparse recovery}
\label{sec:manopt} 
In this section, we present a novel relaxation to $\ell_0$ minimization in form of an optimization problem on manifold. 

Consider problem \eqref{fullrankequiveq1}. As known from the previous section that its solution must have full rank, we can restrict this problem to the space of all full-rank matrices in $\R^{N\times r}$. This adds a rank constraint to the joint sparsity recovery, and our idea is to recast the optimization
problem with full-rank condition in the Euclidean space into one on a manifold that encodes this constraint (i.e., a non-compact Stiefel manifold), \cite{absil2009optimization}. Furthermore, we relax the $\ell_0$ norm of the solution with the concept of orthogonal factor of matrices.

 \begin{definition}
             Let $Z$ be a matrix in $\R^{N\times r}$ with $\rank(Z)=r$, we call $Z(Z^T\! Z)\!^{-\frac{1}{2}}$ its orthogonal factor. 

\end{definition}
       
The orthogonal factor is the product of orthogonal factors in the compact SVD decomposition of $Z$.
Note that $\|Z\|_0=\|Z(Z^TZ)^{-\frac{1}{2}}\|_{0}$, it is quite natural to relax $\|Z\|_0$ with $\|Z(Z^TZ)^{-\frac{1}{2}}\|_{2,1}$. Define the non-compact Stiefel manifold 
$$
\St(N,r)  = \{Z\in \R^{N\times r}: \rank(Z)=r\},
$$ 
we seek to approximate the solution to \eqref{fullrankequiveq1} via the following manifold optimization problem
\begin{equation}
\label{newopt}
\argmin_{Z\in\, \St(N,r)}\|Z(Z^TZ)^{-\frac{1}{2}}\|_{2,1}\ \ \text{  s.t.  } \ \ AZ=V. 
\end{equation}
\begin{remark}$ $
\label{funcinterpret}
\begin{enumerate}
    \item  $\|Z(Z^TZ)^{-\frac{1}{2}}\|_{2,1}$ can also be interpreted as follows: if we take any orthonormal basis for the range of $Z$, then the $\ell_{2,1}$ norm of that basis is invariant of the choice of the basis and exactly equal to $\|Z(Z^TZ)^{-\frac{1}{2}}\|_{2,1}$. 
    \item $\|Z(Z^TZ)^{-\frac{1}{2}}\|_{2,1}$ is a bounded functional and therefore not geodesically convex. A bounded functional is convex on a Riemannian manifold if and only if it is constant (\cite[Corollary 2.5]{udriste}). 
    \item { 
Since proposed manifold optimization method is non-convex, the outcomes of gradient based optimization methods for solving it largely depend on the quality of the initialization. As such, the convex $\ell_{2,1}$ minimization can be used to obtain a good initial value for the manifold method, which will be then used to arrive at a better reconstruction using the conjectured rank-awareness property of the manifold method. } 
\end{enumerate}
\end{remark}

%
 \subsection{Particular cases}
 \label{sec:particular_cases}
Some particular cases of \eqref{newopt} are of interest on their own. For $r=s$, there holds $\|W(W^TW)^{-\frac{1}{2}}\|_{2,1}=s$ with $W$ being the true solution to \eqref{newopt}. 
Indeed, from Remark \ref{funcinterpret},  $\|W(W^TW)^{-\frac{1}{2}}\|_{2,1}$ is the $\ell_{2,1}$ norm of $s$ orthonormal basis vectors that have joint sparsity $s$. Since each of the non-zero rows of this matrix forms a unit vector in $\ell_2$ norm, its $\ell_{2,1}$ norm is equal to $s$.

 For $r=1$, \eqref{newopt} is equivalent to minimizing the $\ell_1/\ell_2$ functional:  
$$		\argmin_{z\in \R^N} \|z\|_{1}/\|z\|_2\ \ \text{  s.t.  } \ \ Az=v. 
$$

This functional has been shown to enhance sparsity of the solutions and outperform $\ell_1$ on many test problems \cite{hoyer04,KTF11,ELX13,yin2014ratio}. However, rigorous theory on whether $\ell_1/\ell_2$ is superior to $\ell_1$ in uniform recovery, specifically, whether the NSP is sufficient for the exact recovery using $\ell_1/\ell_2$ is largely open. One particular challenge is that $\ell_1/\ell_2$ penalty is neither convex nor concave nor separable, therefore could not be treated under current analysis, see \cite{TranWebster17}. Extending to the general case ($r > 1$), we conjecture that the NSP for the manifold optimization problem \eqref{newopt} is less demanding than that for $\ell_{2,1}$ minimization (which is identical to standard NSP, see Theorem \ref{nspcond}). The theoretical verification of this postulation is a subject of ongoing research. 

{ We remark that other functionals are possible. In this paper, we mainly demonstrate the effective performance of the manifold optimization method, leaving open the question of optimality of the proposed functionals.} 

\section{Huber regularization}
\label{sec:huber}
We consider the following unconstrained problem, {which produces the same solution as \eqref{newopt} for large $\lambda$:}
	   \begin{equation}\label{unconstrmanopt}
\argmin_{Z\in\, \St(N,r)} \|Z(Z^TZ)^{-\frac{1}{2}}\|_{2,1} + \frac{\lambda}{2} \|AZ-V\|^2_F.
\end{equation}
{Satisfactory choices of $\lambda$ are problem dependent and often determined empirically, as have been done herein.}
With notice that $\ell_{2,1}$ norm is non-smooth, we further relax $\|\cdot\|_{2,1}$ with Huber regularization to be able to apply manifold optimization methods, which have been mostly developed for smooth optimization. In particular, we replace the $\|X[n]\|_2$  with $H_\delta(\|X[n]\|_2)$ where $H_\delta\in C^1$ is the Huber function
	    $$H_\delta(x)=\begin{cases}
	    x-\delta/2, & x\geq \delta,\\
	    \frac{x^2}{2\delta}, & 0\le  x< \delta,
	    \end{cases}$$
(see Figure \ref{fig:huber}) and consider the problem
\begin{equation}\label{unconstrmanopthuber}
\argmin_{Z\in\, \St(N,r)}  F_\delta(Z) + \frac{\lambda}{2}\|AZ-V\|^2_F,
\end{equation}
where 
$$F_\delta(Z)=\sum_{n=1}^N\left[H_\delta(\|Z(Z^TZ)^{-\frac{1}{2}}[n]\|_2)+\frac{\delta}{2}\right].$$
%
The following proposition reveals that \eqref{unconstrmanopthuber} is an effective surrogate for problem \eqref{unconstrmanopt}. 
\begin{proposition}
If  \eqref{unconstrmanopt} has a unique solution and there exists $\delta_0 > 0$ such that \eqref{unconstrmanopthuber} has unique solution for all $\delta < \delta_0$, then the solution to \eqref{unconstrmanopthuber} converges to the solution to \eqref{unconstrmanopt} as $\delta$ converges to $0$.
\end{proposition}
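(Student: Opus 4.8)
The plan is to view \eqref{unconstrmanopthuber} as a family of perturbations of \eqref{unconstrmanopt} that converges \emph{uniformly} as $\delta\to 0$, and then run the standard perturbed‑optimization argument: uniform convergence of the objectives gives convergence of the optimal values, and convergence of the optimal values, together with compactness and the assumed uniqueness of minimizers, gives convergence of the minimizers themselves.

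First I would isolate the elementary estimate behind everything. For a scalar $x\ge 0$ one computes $H_\delta(x)+\delta/2-x=(x-\delta)^2/(2\delta)$ for $0\le x<\delta$ and $=0$ for $x\ge\delta$, so in all cases $0\le H_\delta(x)+\delta/2-x\le\delta/2$. Applying this to each of the $N$ rows of the orthogonal factor $Q_Z:=Z(Z^TZ)^{-1/2}$ and summing yields, for every $Z\in\St(N,r)$,
\[ 0\;\le\; F_\delta(Z)-\|Q_Z\|_{2,1}\;\le\;\frac{N\delta}{2}. \]
Write $G(Z):=\|Q_Z\|_{2,1}+\tfrac{\lambda}{2}\|AZ-V\|_F^2$ and $G_\delta(Z):=F_\delta(Z)+\tfrac{\lambda}{2}\|AZ-V\|_F^2$ for the objectives of \eqref{unconstrmanopt} and \eqref{unconstrmanopthuber}; the display above reads $G\le G_\delta\le G+\tfrac{N\delta}{2}$ on all of $\St(N,r)$.

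Next let $Z^\ast$ and $Z_\delta$ be the (assumed unique) minimizers of $G$ and of $G_\delta$, $\delta<\delta_0$. Using optimality of each in turn,
\[ G(Z^\ast)\;\le\;G(Z_\delta)\;\le\;G_\delta(Z_\delta)\;\le\;G_\delta(Z^\ast)\;\le\;G(Z^\ast)+\frac{N\delta}{2}, \]
so $G(Z_\delta)\to G(Z^\ast)$ as $\delta\to 0^+$. To pass from this to $Z_\delta\to Z^\ast$ I would fix an arbitrary sequence $\delta_k\downarrow 0$, extract a subsequence (not relabeled) with $Z_{\delta_k}\to\bar Z\in\St(N,r)$, use continuity of $G$ on $\St(N,r)$ (there the orthogonal factor, hence $G$, is smooth) to get $G(\bar Z)=\lim_k G(Z_{\delta_k})=G(Z^\ast)=\min_{\St(N,r)}G$, deduce $\bar Z=Z^\ast$ from uniqueness, and finally observe that every subsequence of $\{Z_{\delta_k}\}$ has a further subsequence converging to $Z^\ast$, so the whole sequence does; since $\delta_k\downarrow 0$ was arbitrary, $Z_\delta\to Z^\ast$.

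The hard part is the compactness invoked in the previous paragraph, which is genuinely delicate because $\St(N,r)$ is \emph{non}-compact. What is needed is that $\{Z_\delta:\delta<\delta_0\}$ be precompact in $\St(N,r)$, i.e.\ bounded in norm and bounded away from the rank‑deficient locus; all these iterates lie in the fixed sublevel set $\{Z:G(Z)\le G(Z^\ast)+N\delta_0/2\}$, so it would suffice to know this sublevel set is precompact. A priori it need not be: the orthogonal‑factor term satisfies $\sqrt r\le\|Q_Z\|_{2,1}\le\sqrt{Nr}$ (Cauchy--Schwarz, since $\|Q_Z\|_F^2=r$), so it provides no barrier near rank‑deficient matrices, and the residual $\|AZ-V\|_F$ is flat along the directions $(\ker A)^{\oplus r}$, so $Z_\delta$ could in principle escape to infinity. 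I would close this gap either by adding a coercivity/properness hypothesis on $G|_{\St(N,r)}$ (a mild assumption, consistent with the uniqueness already being assumed), or, in the situation of Theorem \ref{reductionthm} with $A$ injective on the relevant row‑sparse class and $\lambda$ large, by using the data term to pin down $Z_\delta$ directly. Everything else is bookkeeping: the real content is the uniform $O(\delta)$ approximation of $\|\cdot\|_{2,1}$ by the shifted Huber sum $F_\delta$, together with this compactness.
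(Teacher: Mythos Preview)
Your argument is correct and takes a genuinely different route from the paper. The paper's proof is a three-line appeal to $\Gamma$-convergence: it observes that $F_\delta$ is monotone in $\delta$ and converges pointwise to the continuous functional $\|Z(Z^TZ)^{-1/2}\|_{2,1}$, then cites a standard result (Dal Maso, Proposition 5.7) to conclude $\Gamma$-convergence, and stops there. You instead prove the sharper fact that $F_\delta$ converges \emph{uniformly} on $\St(N,r)$ with the explicit bound $0\le F_\delta-\|Q_Z\|_{2,1}\le N\delta/2$, and then run the elementary sandwich $G(Z^\ast)\le G(Z_\delta)\le G_\delta(Z_\delta)\le G_\delta(Z^\ast)\le G(Z^\ast)+N\delta/2$ to force convergence of values, followed by compactness plus uniqueness to get convergence of minimizers. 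Your approach is more self-contained (no $\Gamma$-convergence machinery needed) and, importantly, you explicitly flag the compactness issue on the non-compact Stiefel manifold---the sublevel sets of $G$ need not be precompact because the orthogonal-factor term is bounded and the residual term is flat along $(\ker A)^r$---which the paper's proof simply does not address; the $\Gamma$-convergence argument there suffers from exactly the same gap, since convergence of minimizers under $\Gamma$-convergence also requires equicoercivity. So your version is both more elementary and more honest about what is actually being assumed.
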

\begin{proof}
$\|Z(Z^TZ)^{-\frac{1}{2}}\|_{2,1}$ is a continuous functional on the compact Stiefel manifold and
 $F_\delta(Z)$
 is decreasing with respect to $\delta$ and pointwise converges to  $\|Z(Z^TZ)^{-\frac{1}{2}}\|_{2,1}$. Thus, from \cite[Proposition 5.7]{dal2012introduction}, $F_\delta(Z)$ $\Gamma$-converges to  $\|Z(Z^TZ)^{-\frac{1}{2}}\|_{2,1}$. 
\end{proof}

{ Unlike the $\lambda$ parameter, there is no trade-off for the parameter $\delta$ in the Huber function per se. The smaller it is, the closer the recovered solution will be to the true solution. However the trade-off will be introduced by the fact that we are using a smooth method (conjugate gradient) to solve a non-smooth problem. If the parameter is too small we may end up with  a non-smooth problem which we are trying to avoid. }

We remark that it is possible to obtain a differentiable objective from \eqref{unconstrmanopt} with other strategies, such as using dummy variables, see \cite{ELX13} for a development of this approach for $\ell_1/\ell_2$ minimization. The conjugate gradient (CG) algorithm is applied to solve the Huber regularization problem \eqref{unconstrmanopthuber}. Since this is a non-convex problem, the iteration may be trapped at some local minimum. For example, when $X$ is of full column rank ($r=s$), we know from Section \ref{sec:particular_cases} that the global minimum of the objective function is equal to $s$; however, the CG algorithm fails to find it in some cases. To deal with this difficulty, we run the algorithm with different starting points  that are randomly selected in $\mathbb{S}(N,r)$ using random normally distributed matrices. In our numerical tests, the global minima can be reached this way after only a few choices of initial values. Certainly, for large-scale problems, advanced methods like simulated annealing \cite{KGV83,Harjek88} can be employed to reduce the number of starting values required. 

\begin{figure}
\begin{center}
		\includegraphics[scale=0.09]{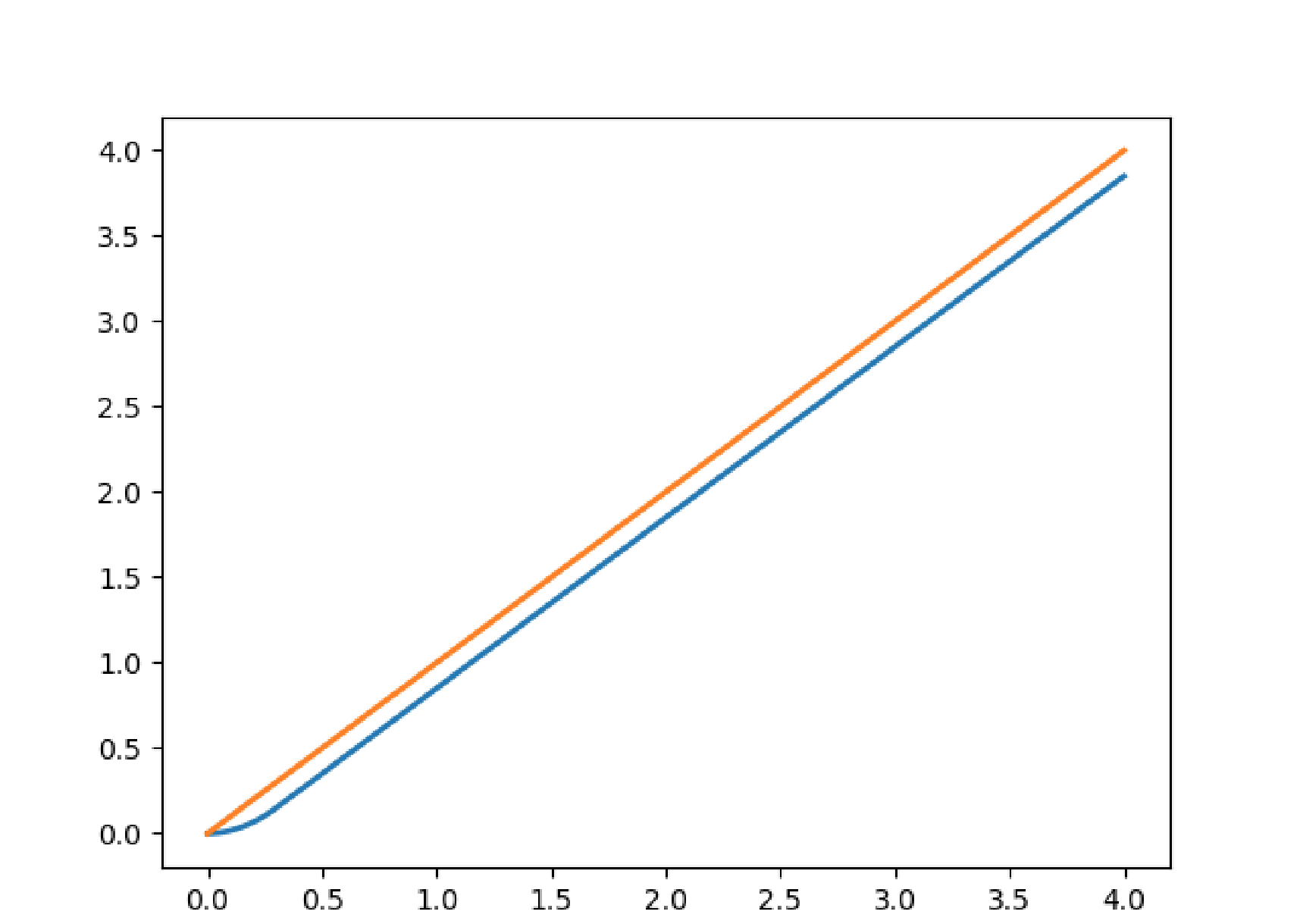}

		\caption{Huber function for $\delta=0.3$ and the $y=x$ function. }
		\label{fig:huber}
\end{center}
\end{figure}

\begin{figure}
\begin{center}
			\includegraphics[scale=0.35]{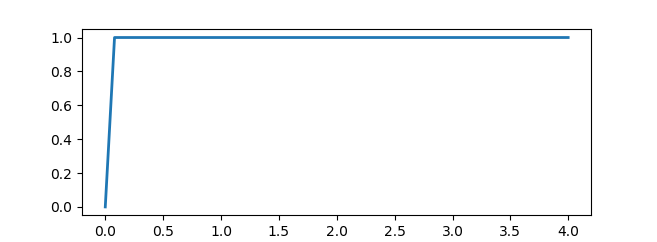}

		\caption{Derivative of Huber function. }
		\label{fig:huber_der}
\end{center}
\end{figure}
%

\section{Explicit formulae for the Euclidean gradients of cost functions}
\label{sec:grad}
The Euclidean gradients are the fundamental building block for the optimization of cost functions on manifolds. Such derivatives are often taxing to calculate, posing a major hurdle for implementing manifold optimization methods for many problems, especially where large matrices or vectors are involved. Interestingly, for our objective functions \eqref{unconstrmanopt} and \eqref{unconstrmanopthuber}, it is possible to derive the exact, explicit formulae for their gradients. Expectedly, these formulae save the users a considerable amount of time on computing either the numerical or symbolic differentiation.


Throughout this section, $\nabla \equiv \nabla_Z$, the derivative with respect to $Z$. Recall $I_N$ is the identity matrix in $\R^{N\times N}$. It can be easily checked that 
$$\nabla\|AZ-V\|_F^2=2A^T(AZ-V),$$
therefore we will focus on computing the gradients  $\nabla \|Z(Z^TZ)^{-\frac{1}{2}}\|_{2,1}$ and $\nabla F_\delta(Z)$.

Let us introduce some additional notation. For a matrix $Z\in\St(N,r)$, denote by $D(Z)$  and $D_\delta(Z)$ the  diagonal matrices in   $\R^{N\times N}$  with 
$$D(Z)_{i,i}=\! \begin{cases}
\frac{1}{\|Z[i,:]\|_2}, &\|Z[i,:]\|_2 \neq 0,\\
0, & \text{otherwise,}
\end{cases}
$$
$$
\ \ \text{ and }\ \  D_\delta(Z)_{i,i}=\! \begin{cases}
\frac{H_\delta^\prime(\|Z[i,:]\|_2)}{\|Z[i,:]\|_2}, &\|Z[i,:]\|_2 \neq 0,\\
0, & \text{otherwise,}
\end{cases} $$
 where $H^\prime_\delta(x)$ is the derivative of the Huber function
$$H^\prime_\delta(x)=\begin{cases}
1, & x\geq \delta,\\
\frac{x}{\delta}, & 0\leq x< \delta.
\end{cases}$$ 
Also denote by $\odot $ the Schur-Hadamard (a.k.a., elementwise) product between two matrices of the same size. The following proposition provides an explicit formula for the cost function \eqref{unconstrmanopt}. 
\begin{proposition}\label{prop:gradoffunc}
Let $Z=U\Lambda V$ be the compact singular value decomposition of $Z \in  \St(N,r)$. Then 
$$\nabla \|Z(Z^TZ)^{-\frac{1}{2}}\|_{2,1}=(I_N-UU^T)D(UV)U\Lambda^{-1}V.$$
\end{proposition}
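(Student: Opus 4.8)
The plan is to compute the differential of $f(Z):=\|Z(Z^TZ)^{-\frac{1}{2}}\|_{2,1}$ at a full-rank $Z$ in an arbitrary ambient direction $H\in\R^{N\times r}$ and then read off $\nabla f(Z)$ from $df(Z)[H]=\la\nabla f(Z),H\ra$ (the Euclidean gradient is legitimate here because full-rank matrices form an open subset of $\R^{N\times r}$). Write $Q=Q(Z)=Z(Z^TZ)^{-\frac{1}{2}}$ for the orthogonal factor and $M=M(Z)=(Z^TZ)^{-\frac{1}{2}}$, so that $Q=ZM$, $Q^TQ=I_r$, and $M$ is symmetric positive definite with $M^{-1}=(Z^TZ)^{\frac{1}{2}}$. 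Both $Q$ and $M$ depend smoothly on $Z$, and differentiating $Q=ZM$ gives $\dot Q:=\tfrac{d}{dt}\big|_{0}Q(Z+tH)=HM+Z\dot M$, where $\dot M$ is the (in general complicated) derivative of the inverse matrix square root. Since $f$ is the composition of $Z\mapsto Q$ with the row-norm sum, and the Euclidean gradient of $R\mapsto\|R\|_{2,1}$ is $D(R)R$ (differentiating each $\|R[i,:]\|_2$ entrywise), the chain rule yields $df(Z)[H]=\la D(Q)Q,\dot Q\ra=\tr\!\big(Q^TD(Q)\,\dot Q\big)$.

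The key step is to discard the two terms that do not contribute, \emph{without} ever evaluating $\dot M$. Split $\dot Q=Q(Q^T\dot Q)+(I_N-QQ^T)\dot Q$. Differentiating $Q^TQ=I_r$ shows that $Q^T\dot Q$ is skew-symmetric, whereas $Q^TD(Q)Q$ is symmetric; hence $\tr\!\big(Q^TD(Q)Q\cdot Q^T\dot Q\big)=0$ and only the normal component survives: $df(Z)[H]=\tr\!\big(Q^TD(Q)(I_N-QQ^T)\dot Q\big)$. Now insert $\dot Q=HM+Z\dot M$ and use $Z=QM^{-1}$, which gives $(I_N-QQ^T)Z=0$; the whole $Z\dot M$ contribution drops. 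What remains, after cyclically permuting the trace and using that $I_N-QQ^T$, $D(Q)$ and $M$ are symmetric, is $df(Z)[H]=\tr\!\big(MQ^TD(Q)(I_N-QQ^T)\,H\big)=\la(I_N-QQ^T)D(Q)QM,\,H\ra$, so that $\nabla f(Z)=(I_N-QQ^T)D(Q)QM$.

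It then remains to rewrite this coordinate-free expression in terms of the compact SVD $Z=U\Lambda V$ (with $U^TU=I_r$, $V$ orthogonal, $\Lambda$ diagonal and positive). One computes $Z^TZ=V^T\Lambda^2V$, hence $M=V^T\Lambda^{-1}V$ and $Q=ZM=U\Lambda VV^T\Lambda^{-1}V=UV$; consequently $QQ^T=UU^T$ and $D(Q)=D(UV)$. Substituting and using $VV^T=I_r$ gives $\nabla f(Z)=(I_N-UU^T)D(UV)(UV)(V^T\Lambda^{-1}V)=(I_N-UU^T)D(UV)U\Lambda^{-1}V$, the claimed identity.

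I expect the main obstacle to be conceptual rather than computational: recognizing that the troublesome derivative $\dot M$ of $(Z^TZ)^{-\frac{1}{2}}$ — which would otherwise force one to solve a Sylvester equation for the inverse square root — is annihilated by the projector $I_N-QQ^T$, and that the tangential piece $Q^T\dot Q$ is killed by the symmetric/skew-symmetric trace argument. A secondary point to handle with care is that $\|\cdot\|_{2,1}$ is not differentiable at matrices with a vanishing row; there the formula should be read as selecting the subgradient corresponding to the convention $D(Q)_{ii}=0$, in line with the definition of $D(\cdot)$ above and with the Huber-regularized functional $F_\delta$ treated next, for which the issue does not arise.
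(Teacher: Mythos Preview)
Your proof is correct and, in fact, cleaner than the paper's. The paper proceeds by invoking an explicit Fr\'echet derivative formula for matrix functions via the SVD (Corollary~3.12 of \cite{noferini2017formula}), which introduces auxiliary matrices $F$ and $G$ with entries $F_{i,j}=1/(\lambda_i+\lambda_j)$ and $G_{i,j}=1/\lambda_j$, and then shows by trace manipulations that the $F$-terms cancel. You bypass this machinery entirely: by splitting $\dot Q$ into its tangential part $Q(Q^T\dot Q)$ and normal part $(I_N-QQ^T)\dot Q$, you kill the tangential piece via the symmetric/skew-symmetric trace argument, and kill the $Z\dot M$ contribution in the normal piece via $(I_N-QQ^T)Z=0$. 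This never requires knowing what $\dot M$ actually is, making your argument self-contained and free of the external reference. The paper's route has the mild advantage that it sits inside a general framework for differentiating $f(Z)$ for arbitrary scalar $f$ applied through the SVD; your route is more elementary and exposes precisely \emph{why} the answer is so simple (the derivative lives entirely in the normal space to the compact Stiefel manifold at $Q$). Your closing remark on the non-differentiability of $\|\cdot\|_{2,1}$ at zero rows and its resolution via the $D(\cdot)$ convention is also apt.
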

\begin{proof}
For a scalar function $f: \R\to \R$, by an abuse of notation, the matrix function induced by $f$ is defined on $\St(N,r)$ as $f(Z)=U\Lambda_f V$, where $\Lambda_f$ is the $r \times r$ diagonal matrix such that
$$
(\Lambda_f)_{ii} = f(\Lambda_{ii}). 
$$
Note that $f(Z) = Z(Z^TZ)^{-\frac{1}{2}}$ for $f\equiv 1$. 
Let $g(Z)=\|Z\|_{2,1}$, our goal is to compute $\nabla (g\circ f)$.

Let $L_g(Z,E)$ denote the Fr\'echet  derivative of $g$ at $Z$ in the direction $E\in \R^{N\times r}$ (and similarly for $f$, $g \circ f$, etc.). We will compute $\nabla (g\circ f)$ via
\begin{equation}
\label{eq:est1}
\tr(\nabla (g\circ f)\cdot E^T) = L_{g\circ f}(Z,E). 
\end{equation}
First, by the chain rule 
$$
L_{g\circ f}(Z,E)=L_{g}(f(Z),L_f(Z,E)).$$
Notice that $\nabla g=D(Z)\cdot Z$, hence
$$
L_{g}(f(Z),L_f(Z,E))  =\tr(D(f(Z))\cdot f(Z)\cdot L_f(Z,E)^T)
    =\tr(D(UV)\cdot f(Z)\cdot L_f(Z,E)^T).
$$

Let $F(Z) \in \R^{r\times r}$ and $G(Z)\in \R^{(N-r)\times r}$ be defined by 
$$ F(Z)_{i,j}=\begin{cases}
\frac{1}{\lambda_i+\lambda_j}, &i\neq j,\\
0, & i=j,
\end{cases}
\ \ \ \
\text{ and }
\ \ \ \
G(Z)_{i,j}=\frac{1}{\lambda_j},
$$
where $\lambda_j$ is the $j$-th singular value of $Z$.
Let $Z=\left[\begin{matrix}
U\;
W
\end{matrix} \right]\left[\begin{matrix}
\Lambda\\
0
\end{matrix} \right] V$ be the full SVD of $Z$.
From \cite[Corollary 3.12]{noferini2017formula}, 
\begin{equation*}
\begin{split}
L_f(Z,E) & =\left[\begin{matrix}
U\;
W
\end{matrix} \right]
\left( \left[\begin{matrix}
F\\
G
\end{matrix} \right]\odot
\left[\begin{matrix}
U^TEV^T\\
W^TEV^T
\end{matrix} \right]\
 +
 \left[\begin{matrix}
-F\\
0
\end{matrix} \right]\odot
 \left[\begin{matrix}
VE^TU\\
W^TEV^T
\end{matrix} \right]\
 \right) V
\\
& = U( F\odot (U^TEV^T-VE^TU)) V+W(G\odot (W^TEV^T))V 
\\
& =U( F\odot (U^TEV^T-VE^TU)) V+WW^TEV^T\Lambda^{-1}V
\\
& =U( F\odot (U^TEV^T-VE^TU)) V+(I_N-UU^T)EV^T\Lambda^{-1}V,
\end{split}
\end{equation*}
using the fact that $G\odot A=A\Lambda^{-1}$, $\forall A \in \R^{(N-r)\times r}$ and $WW^T=I_N-UU^T$.
Thus we have that
\begin{equation}
\label{eq:est2}
\begin{split}
L_{g\circ f}(Z,E) & = \tr(D(UV) U  V \cdot V^T [ F\odot (U^TEV^T-VE^TU)]^T U^T)
\\
&\qquad\qquad\qquad + \tr(D(UV)UVV^T\Lambda^{-1}VE^T(I_N-UU^T))
\\
& = \tr(U^T D(UV)U [F\odot (U^TEV^T-VE^TU)]^T)
\\
&\qquad\qquad\qquad +\tr(D(UV)U\Lambda^{-1}VE^T(I_N-UU^T))
\\
& = \tr((U^TEV^T-VE^TU) \cdot [F\odot ( U^T D(UV)U )]^T)
\\
&\qquad\qquad\qquad +\tr((I_N-UU^T)D(UV)U\Lambda^{-1}VE^T).
\end{split}
\end{equation}
Here we used the fact that $\tr(A\cdot (B\odot C)^T)=\tr(C\cdot (B\odot A)^T).$
Continuing our computations 
\begin{equation*}
\begin{split}
& \tr((U^TEV^T-VE^TU) \cdot [F\odot ( U^T D(UV)U )]^T)
\\
= \ & \tr(U^TEV^T \cdot [F\odot (U^T D(UV)U )]^T)-\tr(VE^TU\cdot [F\odot (U^T D(UV)U )]^T)
\\
= \ & \tr([F\odot (U^T D(UV)U) ] \cdot VE^TU )-\tr(U [F\odot (U^T D(UV)U) ]^TV E^T )
\\
= \ & \tr(U([F\odot (U^T D(UV)U) ] -[F\odot (U^T D(UV)U) ]^T)VE^T )=0, 
\end{split}
\end{equation*}
which, in combining with \eqref{eq:est1}--\eqref{eq:est2}, proves our assertion.
\end{proof}

Explicit formula for the cost function \eqref{unconstrmanopthuber} can be easily derived from the above proof.

\begin{proposition}
Let $Z=U\Lambda V$ be the compact singular value decomposition of $Z\in \St(N,r)$. Then 
$$\nabla F_\delta(Z)=(I_N-UU^T)D_\delta(UV)U\Lambda^{-1}V.$$
\end{proposition}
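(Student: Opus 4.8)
The plan is to mimic the proof of Proposition~\ref{prop:gradoffunc} almost verbatim, with the scalar profile $x\mapsto x$ implicit in the $\ell_{2,1}$ norm replaced throughout by the Huber profile $H_\delta$. First I would note that the constant term $\sum_{n=1}^N \delta/2$ in the definition of $F_\delta$ contributes nothing to the gradient, so that $\nabla F_\delta(Z)=\nabla\!\left(\sum_{n=1}^N H_\delta(\|Z(Z^TZ)^{-\frac{1}{2}}[n]\|_2)\right)$. Writing $f(Z)=Z(Z^TZ)^{-\frac{1}{2}}$ as in the previous proof, the target is therefore $\nabla(g_\delta\circ f)$, where $g_\delta(Z)=\sum_{n=1}^N H_\delta(\|Z[n,:]\|_2)$.

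The one new ingredient is the gradient of $g_\delta$ itself. Differentiating $H_\delta(\|Z[n,:]\|_2)$ in the row variable gives $H_\delta'(\|Z[n,:]\|_2)\,\|Z[n,:]\|_2^{-1}\,Z[n,:]$ whenever the $n$-th row is nonzero, i.e. exactly the $n$-th row of $D_\delta(Z)\cdot Z$; and on a zero row the composite $v\mapsto H_\delta(\|v\|_2)$ equals $\|v\|_2^2/(2\delta)$ near $v=0$, hence is $C^1$ there with vanishing gradient, matching the convention $D_\delta(Z)_{i,i}=0$. Thus $\nabla g_\delta=D_\delta(Z)\cdot Z$, the exact analogue of the identity $\nabla g=D(Z)\cdot Z$ used in the proof of Proposition~\ref{prop:gradoffunc}.

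From here the computation is identical to that of Proposition~\ref{prop:gradoffunc}. The Fr\'echet derivative $L_f(Z,E)$ is a property of $f$ alone and is unaffected by the change of profile; the function $g_\delta$ (equivalently $D_\delta$) enters only through $\nabla g_\delta=D_\delta(Z)Z$, so every line of the trace manipulation following \eqref{eq:est1} goes through with $D(UV)$ replaced by $D_\delta(UV)$. In particular the term $\tr\big((U^TEV^T-VE^TU)\cdot[F\odot(U^TD_\delta(UV)U)]^T\big)$ still vanishes: $F$ is symmetric with zero diagonal and $U^TD_\delta(UV)U$ is symmetric because $D_\delta(UV)$ is diagonal, so $F\odot(U^TD_\delta(UV)U)$ is symmetric and the antisymmetric combination collapses to $0$ exactly as before. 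Reading off the surviving term yields $\nabla F_\delta(Z)=(I_N-UU^T)D_\delta(UV)U\Lambda^{-1}V$.

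I do not expect a genuine obstacle: the statement is essentially a transcription of the preceding proposition, and the only point deserving care is the differentiability of $v\mapsto H_\delta(\|v\|_2)$ at $v=0$, which is salvaged by $H_\delta'(0)=0$ and is consistent with the zero-row convention in the definition of $D_\delta$.
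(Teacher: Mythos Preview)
Your proposal is correct and matches the paper's own proof almost exactly: the paper simply observes that $g_\delta(Z)=\sum_{n=1}^N\bigl[H_\delta(\|Z[n]\|_2)+\tfrac{\delta}{2}\bigr]$ has gradient $\nabla g_\delta=D_\delta(Z)\cdot Z$ and states that the remainder follows verbatim from the proof of Proposition~\ref{prop:gradoffunc}. If anything, you supply more detail than the paper, particularly the remark on differentiability at zero rows and the check that the $F\odot(U^T D_\delta(UV)U)$ term still cancels.
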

\begin{proof}
Notice that the functional 
$$g_\delta(Z)=\sum_{n=1}^N\left[H_\delta(\|Z[n]\|_2)+\frac{\delta}{2}\right]$$
has the gradient $\nabla g_\delta=D_\delta(Z)\cdot Z$ and the rest of the computations follows the same arguments as in Proposition \ref{prop:gradoffunc}.
\end{proof}

	   \section{Numerical experiments with Gaussian data and measurements}
	   \label{sec:exp}
	   	   
We use the Pymanopt package \cite{JMLR2016}, which is the Python version of the original Manopt package \cite{manopt,vandereycken2013lowrank} for MATLAB, in these numerical tests. 
The { conjugate gradient method} with backtracking Armijo line-search provided in Pymanopt is used to solve \eqref{unconstrmanopthuber}. For automatic differentiation, we employ the Autograd package. In each iteration, an $r\times r$ SVD  is computed. For sparse recovery, since $r\leq s$, this step is not numerically expensive.
The initial value is chosen randomly. 
	   
In this test we compare the number of measurements required by $\ell_{2,1}$ minimization and our method in sparse matrix recovery. We generate a random Gaussian matrix $A$ of size $80\times 300$, and form measurement matrices $A_k$ by taking the first $k$ rows of $A$. We also generate a random Gaussian matrix $X$ of sparsity $s=30$ of size $300\times 70$ for the solution and test the recovery of $X$ given the output $Y_k = A_k X$, for each $k\in \{38,40,42,\dots,80\}$. (Hence, $k$ stands for the number of measurements). The test is rerun 22 times for different realizations of $A$ and $X$. We perform the $\ell_{2,1}$ minimization using the spgl1 package \cite{BergFriedlander:2008} {(which is specialized to solve $\ell_1$ and related regularized least square problems),} and our method using Pymanopt, { with $\delta=0.001$ and $\lambda=9$,} until relative reduction of the gradient norm is $10^{-8}$ or 1000 iterations have been run. For each value of $k$, we plot the scattered errors and median of recovery errors in log scale over $22$ trials. The results of the experiment are presented in Figure \ref{fig:num_meas}. As can be seen, our manifold optimization algorithm starts converging with much fewer measurements compared to the Euclidean minimization with the $\ell_{2,1}$ norm.

\begin{figure}
\begin{center}
\includegraphics[scale=0.34]{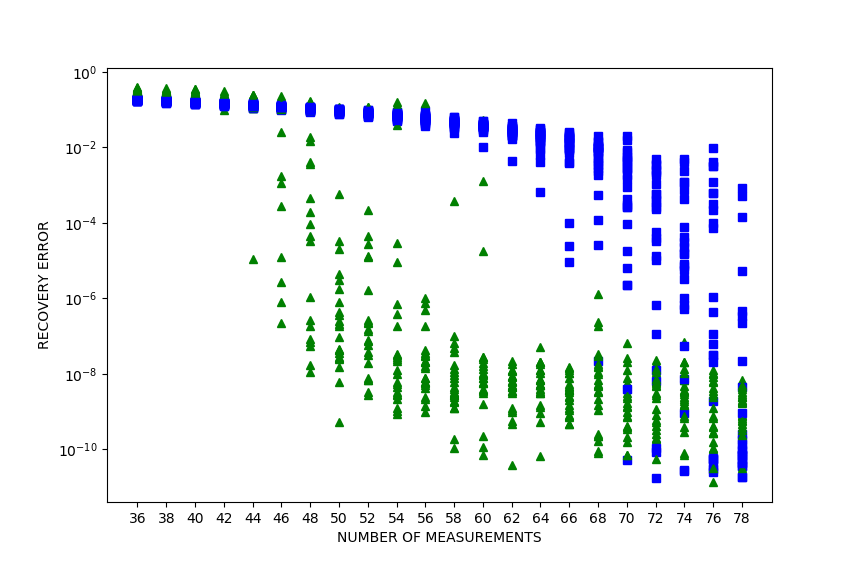}
\includegraphics[scale=0.365]{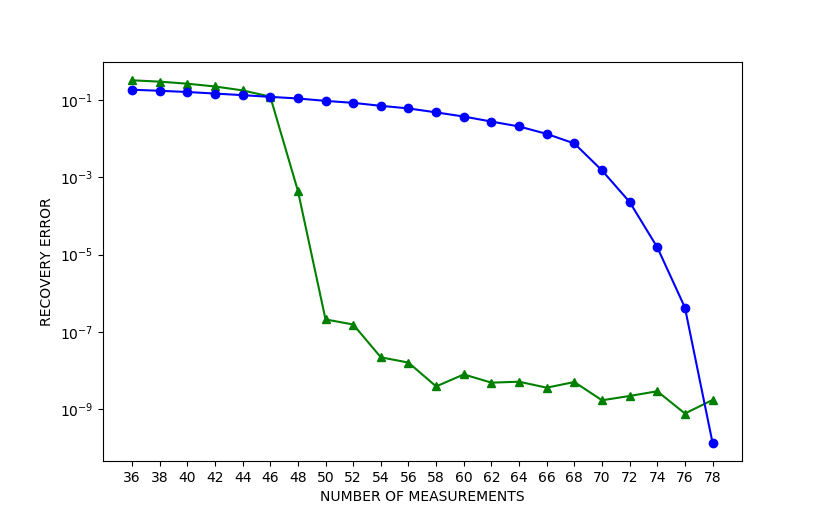}

\caption{Scattered errors and median of errors in log scale vs. number of measurements for $\ell_{2,1}$ (circular labels) and manifold optimization \eqref{unconstrmanopthuber} (triangular labels). }	
\label{fig:num_meas}
\end{center}
\end{figure}
 
\section{Conclusion and future work}

{We study the joint sparse recovery problem. Our result here allows to reduce the  problem to a potentially much smaller dimensional problem where the unknown matrix has full column rank. We also offer a new functional minimization method on the manifold of full-rank matrices for the recovery of the unknown matrix.  As pointed out in Section \ref{sec:jointsparseprob}, we cannot expect to have a rank sensitive convex optimization method on the Euclidean space, so changing the geometry and looking into manifold optimization is probably a good direction to go. {The functional we suggest does encourage sparsity and in our numerical experiment outperforms the Euclidean $\ell_{2,1}$ minimization.}
We pose the following questions as potential directions for future investigation.
\begin{enumerate}[\ \ \ \ \  (a)]
\item 
If  $A\in \R^{M\times N}$ satisfies the $\text{NSP}$ condition, then is it true that any $s$-sparse $x\in \R^N\setminus \{0\}$  can be uniquely recovered by solving 
$$		\argmin_{z\in \R^N} \|z\|_{1}/\|z\|_2\text{  s.t.  } Az=Ax?
$$
 More generally, does $\text{NSP}$ imply unique recovery of every $s$-row sparse $W\in \R^{N\times r}$ from \eqref{newopt}, for any $1\leq r\leq s$?
\item 
Is \eqref{newopt}
 rank aware? More specifically, are there measurement matrices $A$ that recover every $W\in \Sigma_{s,r}$ by solving \eqref{newopt} for large $r$'s but not for small $r$'s?
\item Does there exist a sparsity-promoting geodesically convex manifold optimization method for joint sparse recovery?
\item Investigation of robustness and stability of our method in presence of noise. 
\end{enumerate}

 }
\label{sec:conclusion}

\section{Acknowledgments} We want to thank Richard Barnard (Western Washington University) for suggesting the use of Huber regularization and 
Carola-Bibiane Sch\"onlieb (University of Cambridge) for pointing out the $\Gamma$-convergence argument for Huber regularization to us. { We also thank the anonymous referee for valuable suggestions.}

This work is supported by 
 the National Science Foundation, Division of Mathematical Sciences, Computational Mathematics program under contract number 
  DMS1620280;
  the U.S. Department of Energy, Office of Science, Office of Advanced Scientific Computing Research, Applied Mathematics program under contract number ERKJ314; Scientific Discovery through Advanced Computing (SciDAC) program through the FASTMath Institute under Contract No. DE-AC02-05CH11231; and, the Laboratory Directed Research and Development program at the Oak Ridge National Laboratory, which is operated by UT-Battelle, LLC., for the U.S. Department of Energy under Contract DE-AC05-00OR22725.

\bibliographystyle{model1b-num-names}
\bibliography{references}

\end{document}